\pgfplotsset{compat=1.14}
\tikzset{
	v1/.style={line width=.5pt,blue!33!black},
	v2/.style={line width=.5pt,blue!66!black},
	v3/.style={line width=.5pt,blue!33},
	v4/.style={line width=.5pt,blue!66},
	v5/.style={line width=.5pt,black}
}
\definecolor{dred}{HTML}{C11B17}
\definecolor{dgreen}{HTML}{41A317}
\definecolor{dblue}{HTML}{0000FF}
\definecolor{brilliantrose}{rgb}{1.0, 0.33, 0.64}
\newcommand{\cemph}[1]{\textcolor{dred}{\emph{#1}}}
\newcommand{\R}{\mathbb R}
\newcommand{\K}{\mathcal K}
\newcommand{\conv}{\mathrm{conv}}
\newcommand{\bd}{\mathrm{bd}}
\newcommand{\pos}{\mathrm{pos}}
\newcommand{\aff}{\mathrm{aff}}
\newcommand{\inter}{\mathrm{int}}
\newcommand{\GH}{{\mathbb{GH}}}
\definecolor{zzttqq}{rgb}{0.6,0.2,0}
			\definecolor{ccqqqq}{rgb}{0.8,0,0}
\definecolor{ffvvqq}{rgb}{1,0.3333333333333333,0}
\definecolor{zzffqq}{rgb}{0.6,1,0}
\definecolor{qqwuqq}{rgb}{0,0.39215686274509803,0}
\definecolor{ffzzqq}{rgb}{1,0.6,0}
\definecolor{ffqqqq}{rgb}{1,0,0}
\definecolor{zzttqq}{rgb}{0.6,0.2,0}
\definecolor{uuuuuu}{rgb}{0.26666666666666666,0.26666666666666666,0.26666666666666666}
\newtheorem{thm}{Theorem}[section]
\newtheorem{lemma}[thm]{Lemma}
\newtheorem{proposition}[thm]{Proposition}
\newtheorem{cor}[thm]{Corollary}
\newtheorem{rmk}[thm]{Remark}
\begin{document}

\title[Golden ratio in comparing symmetrizations]{Relating Symmetrizations of Convex Bodies:\\ Once More the Golden Ratio}

\author[R.~Brandenberg]{Ren\'e Brandenberg}\address{Technical University of Munich, Germany, Department of Mathematics, Discrete Mathematics, Optimization, and Convexity}\email{brandenb@ma.tum.de}
\author[K.~von Dichter]{Katherina von Dichter}\address{Technical University of Munich, Germany, Department of Mathematics, Discrete Mathematics, Optimization, and Convexity}\email{dichter@ma.tum.de}
\author[B.~Gonz\'alez Merino]{Bernardo Gonz\'alez Merino}\address{Departamento de Did\'actica de las Ciencias Matem\'aticas y Sociales, Facultad de Educaci\'on, Universidad de Murcia, 30100-Murcia, Spain}\email{bgmerino@um.es}

\thanks{This research is a result of the activity developed within the framework
of the Programme in Support of Excellence Groups of the Regi\'on de Murcia, Spain, by Fundaci\'on
S\'eneca, Science and Technology Agency of the Regi\'on de Murcia, and
is partially funded by FEDER / Ministerio de Ciencia e Innovaci\'on - Agencia Estatal de Investigaci\'on.
The third author is partially supported by Fundaci\'on S\'eneca project 19901/GERM/15, Spain,
and by MICINN Project PGC2018-094215-B-I00 Spain.}

\begin{abstract}
We show that for any Minkowski centered planar convex compact set $C$
the Harmonic mean of $C$ and $-C$ can be optimally contained in the arithmetic mean of the same sets if and only if the Minkowski asymmetry of $C$ is at most the golden ratio $(1+\sqrt{5})/2 \approx 1.618$.
Moreover, the most asymmetric such set that 
 is (up to a linear transformation) a special pentagon, which we call the golden house.
\end{abstract}

\date{\today}\maketitle

\section{Introduction and Notation}\label{sec:IntrNotMain}
The \cemph{golden ratio} $\varphi=(1+\sqrt{5})/2 \approx 1.618$ has a history of 2400 years and wide roots in Mathematics, Music, Architecture, Biology and Philosophy (see e.g.\ \cite{L}).
It was first studied by the ancient greeks because of its frequent appearance in geometry. For example, if one considers a regular pentagon of edge-length 1, its diagonals have length $\varphi$. No wonder that the regular pentagram was the Pythagorean symbol \cite{L}.
The first known definition is given in Euclid's Elements, II.11: "If a straight line is cut in extreme and mean ratio, then as the whole line is to the greater segment, the greater is to the lesser segment". 
Expressed algebraically this transfers to the (probably) best-known definition of the golden ratio:
\begin{equation}\label{goldenratio}
\text{if } a>b>0 \text{ such that } \frac{a+b}{a} = \frac{a}{b}, \quad \text{then} \quad \frac{a}{b} = \varphi.
\end{equation}

Amongst the fundamental inequalities in mathematics, a special place is reserved for the arithmetic-geometric-harmonic mean inequality, which in the two-argument case together with minimum and maximum states 
\begin{equation}\label{eq:means_of_numbers}
\min\{a,b\}\leq\left(\frac{a^{-1}+b^{-1}}{2}\right)^{-1}\leq \sqrt{ab}\leq \frac{a+b}{2}\leq\max\{a,b\}
\end{equation}
for any real numbers $a,b>0$ (see \cite{HLP,Sch}).
We may identify means of numbers by means of segments by associating $a, b > 0$ with $[-a,a]$ and $[-b,b]$. Doing so we  identify, e.g., the arithmetic mean of $a$ and $b$ with the segment $[-\frac{1}{2} \left( a+b \right), \frac{1}{2} \left( a+b \right) ] =: \frac{1}{2} \left( [-a,a]+[-b,b] \right)$. This way means of convex bodies can be introduced. 

Let $\K^n$ denote the set of \cemph{convex bodies}, i.e.~fulldimensional compact convex sets. For $X\subset\R^n$ let $\conv(X)$ (resp.~$\pos(X)$ or $\aff(X)$) be the \cemph{convex hull} (resp.~ \cemph{positive hull} or \cemph{affine hull}) of $X$, i.e.~the smallest convex set (resp.~convex cone or affine subspace) containing $X$. A \cemph{line-segment} is the convex hull of a two point set $\{x,y\} \subset \R^n$, which we denote by $[x,y]$. For any
$K,C\subset\R^n$, $\rho \in \R$ let $K+C=\{a+b:a\in K,b\in C\}$ be the \cemph{Minkowski sum} of $K, C$ and
$\rho C= \{ \rho x: x \in C\}$ the \cemph{$\rho$-dilatation} of $C$. We abbreviate $(-1)C$ by $-C$.

Now, the \cemph{arithmetic mean} of $K$ and $C$ is defined by $\frac{1}{2} (K+C)$, the \cemph{minimum} by $K\cap C$, and the \cemph{maximum} by $\conv(K\cup C)$. For any $K\in\K^n$ let $K^\circ=\{a\in\R^n: a^T x \leq 1,\, x\in K\}$ be the \cemph{polar} of $K$.
Since the polarity can be regarded as the higher-dimensional replacement of the inversion operation $x\rightarrow 1/x$ (cf.~\cite{MR}), the \cemph{harmonic mean} of $K$ and $C$ is defined by $\left( \frac{1}{2}(K^\circ+C^\circ) \right)^{\circ}$.
The geometric mean has been extended in several ways (cf.~\cite{BLYZ} or \cite{MR}), thus it would need a separate, more involved treatment, which is the reason why we focus on the four other means here. The study of means of convex bodies started in the 1960's  \cite{F,F2,F3}, but there also exist several recent papers 
\cite{MR,MR2,MMR}.

Probably the most essential result of Firey is the extension of the harmonic-arithmetic mean inequality from positive numbers to convex bodies containing 0 in there interior in \cite{F}. Moreover, one can easily show that
Firey's inequality again may be extended involving the minimum and maximum:

\begin{proposition} \label{prop:means_of_sets}
 For all $K,C \in\K^n$ with $0$ in their interior we have
 \begin{equation}\label{eq:means_of_sets}
   K\cap C\subset \left(\frac{K^\circ+C^\circ}{2}\right)^{\circ}\subset\frac{K+C}{2}\subset\conv(K\cup C).
 \end{equation}
\end{proposition}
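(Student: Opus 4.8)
The plan is to translate each of the three inclusions in \eqref{eq:means_of_sets} into a pointwise inequality between support functions or gauge functionals, and then to read it off directly from the numerical inequality \eqref{eq:means_of_numbers}. Recall that for $L\in\K^n$ with $0\in\inter L$ one has $L\subseteq M$ if and only if $h_L\leq h_M$ pointwise, equivalently $\|\cdot\|_L\geq\|\cdot\|_M$ pointwise, where $h_L(u)=\max_{x\in L}\iprod{x}{u}$ is the support function and $\|x\|_L=\min\{\lambda\geq 0:x\in\lambda L\}$ the gauge (Minkowski functional); moreover $h_{L^\circ}=\|\cdot\|_L$, $\|\cdot\|_{L^\circ}=h_L$, and $L^{\circ\circ}=L$. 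These facts are available because $0\in\inter L$ keeps $L^\circ$ bounded.

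For the two outer inclusions: the rightmost one is immediate from $h_{(K+C)/2}=\tfrac12(h_K+h_C)$ and $h_{\conv(K\cup C)}=\max\{h_K,h_C\}$ together with $\tfrac12(a+b)\leq\max\{a,b\}$ from \eqref{eq:means_of_numbers}, applied with $a=h_K(u)$, $b=h_C(u)$ for each $u$. For the leftmost one, note $\|\cdot\|_{K\cap C}=\max\{\|\cdot\|_K,\|\cdot\|_C\}$ (since $x\in\lambda(K\cap C)$ iff $x\in\lambda K$ and $x\in\lambda C$), while, writing $H:=\left(\frac{K^\circ+C^\circ}{2}\right)^\circ$, we get $\|\cdot\|_H=h_{(K^\circ+C^\circ)/2}=\tfrac12\bigl(h_{K^\circ}+h_{C^\circ}\bigr)=\tfrac12\bigl(\|\cdot\|_K+\|\cdot\|_C\bigr)$. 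Since $\max\{a,b\}\geq\tfrac12(a+b)$, we obtain $\|\cdot\|_{K\cap C}\geq\|\cdot\|_H$, i.e.\ $K\cap C\subseteq H$.

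It remains to prove the middle inclusion $H\subseteq\frac{K+C}{2}$, which is exactly Firey's harmonic--arithmetic mean inequality from \cite{F}, but follows easily in the same language: we must show $\|\cdot\|_{(K+C)/2}\leq\|\cdot\|_H=\tfrac12(\|\cdot\|_K+\|\cdot\|_C)$. The case $x=0$ is trivial, so fix $x\neq 0$ and put $\mu=\|x\|_K$, $\nu=\|x\|_C$; then $\tfrac{x}{\mu}\in K$ and $\tfrac{x}{\nu}\in C$, hence $\tfrac12\bigl(\tfrac{x}{\mu}+\tfrac{x}{\nu}\bigr)=\tfrac12\bigl(\tfrac1\mu+\tfrac1\nu\bigr)x\in\frac{K+C}{2}$, which gives $\|x\|_{(K+C)/2}\leq\bigl(\tfrac12(\mu^{-1}+\nu^{-1})\bigr)^{-1}$. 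By \eqref{eq:means_of_numbers} this harmonic mean is at most $\tfrac12(\mu+\nu)=\tfrac12(\|x\|_K+\|x\|_C)$, proving the claim. Chaining the three inclusions yields \eqref{eq:means_of_sets}. There is no serious obstacle here; the only points needing care are the standard polarity dictionary recalled above and the observation that set inclusion among bodies with $0$ in the interior is equivalent to the reverse pointwise inequality of gauges.
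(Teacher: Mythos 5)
Your proof is correct and complete. The paper itself does not prove Proposition \ref{prop:means_of_sets}: it cites Firey \cite{F} for the middle (harmonic--arithmetic) inclusion and dismisses the two outer ones as easily shown, so there is no in-paper argument to compare against. Your route is the natural self-contained one: the dictionary $h_{L^\circ}=\|\cdot\|_L$, $\|\cdot\|_{L^\circ}=h_L$ (valid since $0\in\inter K$, $0\in\inter C$ keeps all polars bounded with $0$ in their interiors) turns the harmonic-mean body's gauge into the \emph{arithmetic} mean of the gauges, while the point $\tfrac12(x/\mu+x/\nu)$ shows the arithmetic-mean body's gauge is at most the \emph{harmonic} mean of the gauges; together with $\|\cdot\|_{K\cap C}=\max\{\|\cdot\|_K,\|\cdot\|_C\}$ and $h_{\conv(K\cup C)}=\max\{h_K,h_C\}$, all three inclusions collapse to the numerical chain \eqref{eq:means_of_numbers} applied pointwise. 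This is essentially Firey's original radial-function argument recast in gauge language, and it has the merit of exhibiting the proposition as a literal pointwise instance of the scalar mean inequalities, which is exactly the conceptual link the paper's introduction gestures at. The only steps that deserve the care you already gave them are the direction of the equivalence $L\subseteq M\Leftrightarrow\|\cdot\|_L\geq\|\cdot\|_M$ and the observation that $\mu,\nu>0$ for $x\neq 0$ so the harmonic mean is defined.
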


Let us mention an application given in \cite{F4}. For two positive definite symmetric matrices $A, B \in \R^{n \times n}$ we denote by $A \succcurlyeq B$ if $A-B$ is also positive definite. Since means of ellipsoids correspond to combinations of the corresponding matrices, 
\eqref{eq:means_of_sets} also results in a (generalized) harmonic-arithmetic mean inequality:
\[
(1 - \lambda)A + \lambda B \succcurlyeq ((1 - \lambda)A^{-1} + \lambda B^{-1} )^{-1}
\]
for any $\lambda \in [0,1]$.
The inequality is strict, except in the trivial cases $A=B$ or $\lambda \in \{ 0,1 \}$. Moreover, the well known Brunn-Minkowski determinantal inequality \cite{Ha}
\[((1 - \lambda) \det(A) + \lambda \det(B))^{\frac{1}{n} } \geq  
\det((1 - \lambda)A)^{\frac{1}{n}} + \det(\lambda B)^{\frac{1}{n}},\]\
can be further developed using the means of convex bodies 
as follows \cite{F4}: let $k \in \{1,\dots,n\}$ and $|A|_k$ denote the product of the $k$ greatest eigenvalues of $A$ then
\begin{equation*}\label{prop:bohnenblust}
	|(1 - \lambda)A^{-1} + \lambda B^{-1}|_k^{-\frac{1}{k}}  \leq ((1 - \lambda) |A|_k^{-\frac{1}{k}} + \lambda |B|_k^{-\frac{1}{k}})^{-1}.
\end{equation*}

For any $K,C\in\K^n$ we say that $K$ is \cemph{optimally contained} in $C$, and denote it by $K\subset^{opt}C$, if $K\subset C$ and $K\not \subset t+\rho C$ for any $0 \leq \rho<1$ and $t\in\R^n$.
If $C=t -C$ for some $t \in \R^n$, we say $C$ is \cemph{symmetric}, and if $C=-C$, we say $C$ is \cemph{$0$-symmetric}.
The family of 0-symmetric convex bodies is denoted $\K^n_0$. By $T \in \K^n$ we denote a regular simplex with (bary-)center 0.

The goal of this paper is to consider optimal containments of means of $C$ and $-C$ of a convex body $C$,
i.e.~symmetrizations of $C$. This kinds of symmetrizations are used frequently in convex geometry, e.g.~as extreme cases of a variety of geometric inequalities. Consider, e.g., the Bohnenblust inequality \cite{Bo}, which bounds the ratio of the circumradius and the diameter of convex bodies in arbitrary normed spaces, for which equality is reached in spaces with  $T \cap (-T)$ or $ \frac{1}{2} (T-T)$ as the unit ball \cite{BrK}.  
Or consider the characterization of normed spaces in which $C$ is complete or reduced, if the unit ball is sandwiched between suitable rescalings of two different means of $C$ and $-C$ \cite[Prop.~3.5 to 3.10]{BGJM}.

Also well-known geometric inequalities have been reinvestigated, replacing one mean by another. Consider, e.g.~the Rogers-Shephard-type inequalities, which bound the ratio of the products of the volumes of the maximum and harmonic (resp. arithmetic) means of $K$ and $C$ with the product of their volumes \cite{RS,AGJV,AEFO}.

Notice that for any $C \in \K^n $ we have 
\[
C\cap (-C) \subset^{opt} \left(\frac{C^\circ - C^\circ}{2}\right)^{\circ} \quad \text{and} \quad \frac{C-C}{2} \subset^{opt} \conv(C\cup (-C)). 
\]
Moreover, 
\[\left(\frac{C^\circ-C^\circ}{2}\right)^{\circ}\subset^{opt} \frac{C-C}{2} 
\]
is also possible, i.e.~all containments in \eqref{eq:means_of_sets} may be optimal at the same time 
 even for non-symmetric $C$. Particularly if $T \in \K^3$ is a regular simplex with center $0$ we have the nice situation that the four means are a cross polytope (minimum), a rhombic dodecahedron (harmonic mean), a cube octahedron (arithmetic mean), and a cube (maximum), such that even the cross polytope is optimally contained in the cube.

However, in the planar case, optimal containment of the harmonic mean of $T$ and $-T$ in their arithmetic mean for an equilateral triangle $T$ enforces that the center of the triangle is not 0.
In contrast, for the equilateral triangle $T\subset\R^2$ with center $0$ holds
\[
  \left(\frac{T^\circ-T^\circ}{2}\right)^{\circ}\subset^{opt}\frac89 \cdot \frac{T-T}{2}
   \quad \text{and} \quad T\cap (-T) \subset^{opt}\frac23 \cdot\conv(T\cup (-T)).
\]
Clearly, symmetrizations of an already symmetric $C$ should coincide with $C$, which is always true for the arithmetic mean of $C$ and $-C$, but for the other three considered means only if $0$ is the center of symmetry of $C$. This indicates the need to fix a meaningful center for every convex body first and then concentrate on translates with that center at 0.


Since we want to investigate the optimality of the inequality chain \eqref{eq:means_of_sets} in dependence of asymmetry we will introduce one of the most common asymmetry measures, which fits best to our purposes, and 
choose the center definition matching it.
The \cemph{Minkowski asymmetry} of $C$ is defined by $s(C):=\inf \{ \rho >0: C-c \subset \rho (C-c), c \in \R^n \}$ \cite{Gr} and a \cemph{Minkowski center} of $C$ is any $c \in \R^n$ such that $C-c \subset s(C)(c-C)$ \cite{BG}. Moreover, if $c=0$ is a Minkowski center, we say $C$ is \cemph{Minkowski centered}. Note that $s(C)\in[1,n]$ for $C\in\K^n$, where $s(C)=1$ if and only if $C$ is centrally symmetric, while $s(C)=n$ if and only if $C$ is an $n$-dimensional simplex \cite{Gr}. Moreover, the Minkowski asymmetry $s:\K^n\rightarrow[1,n]$ is continuous w.r.t.~the Hausdorff metric (see \cite{Gr}, \cite{Sch} for some basic properties) and invariant under non-singular affine transformations.

The main contribution of this paper 
shows that the golden ratio is the largest asymmetry such that \eqref{eq:means_of_sets} can be optimal in the planar case.

\begin{thm}\label{thm:PlanarCase}
Let $C\in\mathcal K^2$ be Minkowski centered such that
\[
\left(\frac{C^\circ - C^\circ}{2}\right)^{\circ}\subset^{opt} \frac{C-C}{2},
\]
then $s(C)\leq \varphi$. 

Moreover, if $s(C)=\varphi$, there exists a non-singular linear transformation $L$ such that $L(C)=\GH:= \conv(\{p^1, \dots, p^5 \})$ with $p^1= (-1,-1)^T$, $p^2=(-1,0)^T$, $p^3=\left(0, \varphi \right)^T$, $p^4= (1,0)^T$, $p^5=(1,-1)^T$ is the \cemph{golden house}.
\end{thm}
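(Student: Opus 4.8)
The plan is to translate the optimal containment $\bigl(\frac{C^\circ-C^\circ}{2}\bigr)^{\circ}\subset^{opt}\frac{C-C}{2}$ into a statement about supporting hyperplanes that are simultaneously touching. Recall that $K\subset^{opt}D$ for $0$-symmetric $D$ means not only $K\subset D$ but that no smaller dilate $\rho D$ (centered at $0$, since both bodies are $0$-symmetric here) contains $K$; by a standard characterization this happens if and only if the common boundary points of $K$ and $\bd D$ cannot be separated from $0$, i.e.\ $0$ lies in the convex hull of the "contact directions" (the outer normals of $D$ at those points, suitably interpreted as points via the contact set of $K^\circ$ and $D^\circ$). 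Writing $A:=\frac{C-C}{2}=\frac{1}{2}(C+(-C))$ and $H:=\bigl(\frac{C^\circ-C^\circ}{2}\bigr)^{\circ}$, note $A^\circ=C^\circ\cap(-C^\circ)$ and $H^\circ=\frac{C^\circ-C^\circ}{2}$. So the containment $H\subset^{opt}A$ dualizes to $A^\circ\subset^{opt}H^\circ$, that is $C^\circ\cap(-C^\circ)\subset^{opt}\frac{C^\circ-C^\circ}{2}$. Thus, replacing $C$ by $C^\circ$ (which preserves being a Minkowski centered planar convex body, though it changes the asymmetry — one must be slightly careful, or just work with both $C$ and $C^\circ$ in parallel), the problem becomes: characterize when $D\cap(-D)\subset^{opt}\frac{D-D}{2}$, the optimal containment of the minimum in the arithmetic mean. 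I would carry out the argument in this "primal" form, as contact points of $D\cap(-D)$ in $\bd\frac{1}{2}(D-D)$ are concrete: a point $\frac{1}{2}(d_1-d_2)$ with $d_1,d_2\in D$ lying on $\bd\frac{D-D}{2}$ forces $d_1,d_2$ to lie on $\bd D$ with a common outer normal, and lying in $D\cap(-D)$ forces extra inclusions.

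First I would set up coordinates using the Minkowski center at $0$ and a direction realizing $s(C)=:s$: there are parallel supporting lines of $D$ at "ratio $s$", giving boundary points $u$ and $-su$ (after scaling) or rather the standard configuration where a point of $D$ and a scaled antipodal point are both extremal. Second, I would enumerate the contact points of $D\cap(-D)$ with $\bd\frac{1}{2}(D-D)$: optimality ($0$ in the convex hull of normals) in the plane means there are at least two contact normals "spanning" $0$, i.e.\ the contacts occur on (at least) two arcs whose normal cones surround the origin; in the planar polygonal extremal case this pins down a small number of contact edges/vertices. Third, each contact point gives an equation relating boundary points of $D$; combined with the Minkowski-center condition $D\subset s(-D)$ and the definition of $A=\frac12(D-D)$, these equations constrain the shape. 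The extremal case — where $s$ is as large as possible — should be a polygon with the minimal number of vertices consistent with all these constraints, and I expect the analysis to collapse to: $D$ has a vertex $v$ with $D-v\subset$ a cone, an antipodal face, and the golden-ratio relation $\frac{a+b}{a}=\frac ab$ emerging from requiring a midpoint-of-an-edge contact point to simultaneously satisfy the $0$-in-hull condition; chasing this yields $s\le\varphi$ with equality forcing the five vertices of $\GH$ (up to the linear map, after polarizing back).

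For the $s(C)\le\varphi$ bound I would argue by contradiction: assume $s(C)>\varphi$ and use the contact-point equations together with the triangle-like structure forced by high asymmetry (since $s$ close to $2$ forces $C$ close to a triangle, and a triangle's symmetrizations do not satisfy the optimal containment, as the excerpt already notes) to derive that some contact normal configuration cannot surround $0$, contradicting optimality. Concretely, I expect one can reduce to a worst-case sub-polygon: replace $C$ by the convex hull of the contact points and the asymmetry-realizing points, show this only decreases the "room" available, and then solve the resulting finite optimization in $s$ explicitly, the optimum being $\varphi$ attained by $\GH$. For the equality/uniqueness part, once $s(C)=\varphi$, every inequality used in the bound must be tight, which forces $C$ (or $C^\circ$) to be exactly the polygon with the prescribed vertices; one then checks directly (a routine but necessary computation) that $\GH$ indeed is Minkowski centered at $0$, has $s(\GH)=\varphi$, and satisfies $\bigl(\frac{\GH^\circ-\GH^\circ}{2}\bigr)^{\circ}\subset^{opt}\frac{\GH-\GH}{2}$, which legitimizes the "if and only if" flavor and exhibits the extremal set.

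The main obstacle I anticipate is the case analysis in the middle step: "optimal containment" gives a Minkowski-separation condition ($0$ in the convex hull of contact normals), but in a general planar convex body the contact set can be complicated (several arcs, mixtures of edges and vertices of $C$), so one must carefully show that to \emph{maximize} $s$ it suffices to consider a controlled polygonal configuration, and then handle the finitely many combinatorial types of how the contact normals can surround $0$. Making the reduction to finitely many cases rigorous — rather than just plausible — and ensuring no exotic configuration beats $\varphi$ is where the real work lies; the golden-ratio equation itself should fall out cleanly (as in \eqref{goldenratio}) from the extremal configuration once the right picture is fixed.
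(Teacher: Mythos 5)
There is a genuine gap here, on two levels. First, your dualization step rests on a false identity: $\bigl(\tfrac{C-C}{2}\bigr)^{\circ}$ is \emph{not} $C^\circ\cap(-C^\circ)$. Polarity exchanges $\conv(K\cup L)$ with $K^\circ\cap L^\circ$, but it sends the Minkowski sum $\tfrac12(K+L)$ to the \emph{harmonic} mean $\bigl(\tfrac12(K^{\circ\circ}+L^{\circ\circ})\bigr)^{\circ}$ of the polars, not to their intersection (compare support/gauge functions: $\tfrac12(h_K+h_L)\neq\max\{h_K,h_L\}$). So the "primal" problem you propose to study, the minimum optimally contained in the arithmetic mean, is not what the hypothesis dualizes to. It does happen to be an equivalent condition, but only because of the chain $C\cap(-C)\subset(\tfrac12(C^\circ-C^\circ))^\circ\subset\tfrac12(C-C)\subset\conv(C\cup(-C))$ together with the equivalence of optimality at the two ends with optimality in the middle --- which is precisely the content of Theorem \ref{lem:Charact_Opt_Means} and must itself be proved (the paper does so via the arithmetic--harmonic mean inequality applied to the two radial scaling factors $\rho_1,\rho_2$ at a common boundary point). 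You cannot get it for free from a polarity identity that does not hold.

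Second, and more seriously, the entire quantitative core of the theorem --- why the threshold is $\varphi$ and why equality forces the golden house --- is absent; you yourself flag the reduction to "a controlled polygonal configuration" as "where the real work lies," and that is exactly the part a proof must supply. For comparison, the paper's route is: (a) Theorem \ref{lem:Charact_Opt_Means} converts the hypothesis into the existence of antipodal points $\pm p$ with parallel supporting lines; (b) Minkowski centeredness gives three contact points $q^1,q^2,q^3\in\bd(C)\cap\bd(-sC)$ with $0\in\conv(\{q^1,q^2,q^3\})$ (this uses Klee's result, Proposition \ref{prop:klee}, via Corollary \ref{cor:zero-inside} --- a step your sketch does not account for); (c) three explicit, monotone modifications of the $q^i$ reduce everything to a one-parameter family $q^2=(1/s,-a)^T$, $q^3=(-1/s,-1)^T$, $a\in(0,1]$, in which the validity condition becomes $s\gamma\le 1$ with $\gamma=\frac{(s-1)(a+1)^2}{4a-(s-1)(a-1)^2}$; and (d) the resulting optimization $\max s$ subject to $(s^2-1)(a+1)^2-4as\le 0$ is solved explicitly, giving $h(1)=\varphi$ and, in the equality case, the five vertices of $\GH$ up to a diagonal linear map. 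Your "by contradiction using that $s$ near $2$ forces $C$ near a triangle" idea would additionally require a quantitative stability statement for the simplex characterization of $s(C)=n$, which is nowhere established and is not how the bound actually arises. As it stands the proposal is a plausible plan, not a proof.
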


\begin{figure}
  \begin{subfigure}[b]{0.47\textwidth}
    \centering
    \begin{tikzpicture}[scale=1.5]
      \draw [thick,dred] (-1,0) -- (0,1.618) -- (1,0) -- (1,-1) -- (-1,-1) -- (-1,0);
      \draw [thick, dblue] (-1.618,0) -- (0,-2.618) -- (1.618,0) -- (1.618,1.618) -- (-1.618,1.618) -- (-1.618,0);
      \draw [fill] node[above left]{$0$} (0,0) circle [radius=0.02];
      \draw [dashed] (-1, -2) -- (-1,2.1);
      \draw [dashed] (1, -2) -- (1,2.1);
      \draw (-1.3,-1) node {$p^1$};
      \draw (-1.28,0) node {$p^2$};
      \draw (0,1.9) node {$p^3$};
      \draw (1.0,-2.4) node {$-s(\GH)p^3$};
      \draw (1.28,0) node {$p^4$};
      \draw (1.3,-1) node {$p^5$};
      \draw (0.5,-0.75) node {$\GH$};
      \draw [fill] (0,-1) circle [radius=0.02];
      \draw (0,-1.27) node {$g$};
    \end{tikzpicture}
    \caption{$\GH$ (red), $-s(\GH) \GH$ (blue), and \\ parallel supporting halfspaces in $p^2$ \\ and $p^4=-p^2$ (dashed).} \label{fig:GH}
  \end{subfigure} \hfill
  \begin{subfigure}[b]{0.47\textwidth}
    \centering
    \begin{tikzpicture}[scale=1.5]
      \draw [thick, orange](-1,-1) -- (-1,1) -- (0,1.618) -- (1,1) -- (1,-1) -- (0,-1.618) -- (-1,-1);
      \draw [thick,dblue](-1,0) -- (-0.38195,1) -- (0.38195,1) -- (1,0) -- (0.38195,-1) -- (-0.38195,-1) -- (-1,0);
      \draw [thick, dred](-1,0.5) -- (-0.5,1.309) -- (0.5,1.309) -- (1,0.5) -- (1,-0.5) -- (0.5,-1.309)  -- (-0.5,-1.309) -- (-1,-0.5)-- (-1,0.5);
      \draw [thick, violet](-1,0) -- (-0.75,0.785) -- (0,1.2361) -- (0.75,0.785) -- (1,0) -- (0.75,-0.785) -- (0,-1.2361) -- (-0.75,-0.785) -- (-1,0);
      \draw [fill] (0,0) circle [radius=0.02];
      \draw [dashed] (-1, -2) -- (-1,2.1);
      \draw [dashed] (1, -2) -- (1,2.1);
      \draw (0,-2.618) node {};
		\end{tikzpicture}

    \caption{$\conv(\GH \cup (-\GH))$ (orange), $\frac{1}{2} (\GH-\GH)$ (red), $\left(\frac{1}{2} (\GH^\circ+(-\GH)^\circ) \right)^{\circ}$ (violet), and \\ $\GH \cap (-\GH)$ (blue).} \label{fig:GH-symm}
  \end{subfigure}
  \caption{The golden house and its symmetrizations.}
\end{figure}

The important facts about the construction of the golden house are the following:

\begin{enumerate}[(i)]
	\item $p^2=-p^4$,
	\item $\|p^2-p^3\|= \|p^4-p^3\|$,
	\item $\conv (\{p^1, -s(\GH)p^3,p^5 \})$ and $\conv (\{ p^2, p^3,  p^4 \})$ are similar up to reflection.
\end{enumerate}

Let $g:= [p^1,p^5] \cap [p^3,-s(\GH)p^3]$, $\alpha:=\|p^3-g\|$, and $\beta:=\|p^3\|$. 
Then we have on the one hand
\begin{equation}\label{eq:s1}
s(\GH)= \frac{ \|-s(\GH)p^3 \|}{\|p^3 \|}=\frac{ \|p^3-g \|}{\|p^3 \|}=  \frac{ \alpha}{\beta},
\end{equation}
and on the other hand
\begin{equation}\label{eq:s2}
s(\GH)= \frac{ \|-s(\GH)p^3 -p^3\|}{\|p^3 -g \|}= \frac{ \alpha+\beta}{\alpha}.
\end{equation}
Combining \eqref{eq:s1} and \eqref{eq:s2} we see that $s(\GH) = \varphi$.

To the best of our knowledge, this is the first explicit mentioning of a set with the properties of the golden house. Theorem \ref{lem:Charact_Opt_Means} shows that (i) and (ii) from above suffice to show that the in the case of the golden house (and its negative) optimal containment is reached in \eqref{eq:means_of_sets} throughout the full chain. Even more so, from Theorem 1.3 below it directly follows that the minimum is optimally contained in the maximum.

For any $C \in\K^n$ let $\bd(C)$ be the \cemph{boundary} of $C$ and for any $a\in\R^n \setminus \{0\}$
and $\rho\in\R$, $H^{\le}_{a,\rho} = \{x\in\R^n: a^Tx \leq \rho\}$ denote a \cemph{halfspace}.
We say that the halfspace $H^{\le}_{a,\rho}$ \cemph{supports} $C \in\K^n$ at $q \in C$, if $C \subset H^{\le}_{a,\rho}$ and $q \in \bd(H^{\le}_{a,\rho})$. 

\begin{thm}\label{lem:Charact_Opt_Means}
	Let $C \in \K^n$ be Minkowski centered. Then the following are equivalent:
	\begin{enumerate}[(i)]
		\item $C \cap (-C) \subset^{opt} \conv(C \cup (-C))$,
		\item $\left(\frac12 (C^\circ-C^\circ)) \right)^{\circ}\subset^{opt} \frac12 (C-C)$,
		\item there exist $p, -p \in \bd(C)$ and parallel halfspaces $H^{\le}_{a,\rho}$ and $H^{\le}_{-a,\rho}$ supporting $C$ at
      $p$ and $-p$, respectively.
	\end{enumerate}
\end{thm}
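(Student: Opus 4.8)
The plan is to establish the cycle (i) $\Rightarrow$ (ii) $\Rightarrow$ (iii) $\Rightarrow$ (i), with the substance lying in (ii) $\Rightarrow$ (iii). Write $H := \left(\tfrac12(C^\circ - C^\circ)\right)^\circ$ and $A := \tfrac12(C-C)$. Since $C$ is Minkowski centered we have $0 \in \inter C$, so Proposition~\ref{prop:means_of_sets} applied to the pair $(C,-C)$ gives the chain $C\cap(-C) \subseteq H \subseteq A \subseteq \conv(C\cup(-C))$, consisting of $0$-symmetric bodies each having $0$ in its interior. The workhorse is the elementary observation that for $0$-symmetric bodies $K \subseteq L$ one has $K \subset^{opt} L$ if and only if $K \cap \bd(L) \neq \emptyset$: if $K \subseteq t + \rho L$ for some $0 \le \rho < 1$, then by symmetry $K \subseteq -t+\rho L$ as well, hence $K \subseteq (t+\rho L)\cap(-t+\rho L) \subseteq \rho L \subseteq \inter L$ by convexity, which is impossible if $K$ touches $\bd(L)$; the converse holds by compactness of $K$. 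Granting this, (i) $\Rightarrow$ (ii) is immediate — if $H \subseteq \rho A$ with $\rho < 1$ then $C\cap(-C) \subseteq \rho\,\conv(C\cup(-C))$, contradicting (i) — and (iii) $\Rightarrow$ (i) is almost as quick: parallel supporting halfspaces $H^{\le}_{a,\rho}$ at $p$ and $H^{\le}_{-a,\rho}$ at $-p$ force $h_C(a) = h_C(-a) = \rho$, whence $\conv(C\cup(-C))$ has supporting value $\rho$ in direction $a$ and $p \in \bd(\conv(C\cup(-C)))$; since $-p \in \bd(C)$ gives $p \in -C$, the point $p$ lies in $(C\cap(-C))\cap\bd(\conv(C\cup(-C)))$, and the criterion yields (i).

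For (ii) $\Rightarrow$ (iii) I would proceed as follows. Let $\|x\|_C := \min\{\lambda \ge 0 : x \in \lambda C\}$ be the gauge of $C$; using $h_{C^\circ} = \|\cdot\|_C$, computing the support function of $\tfrac12(C^\circ - C^\circ)$ shows $H = \{x : \|x\|_C + \|{-x}\|_C \le 2\}$, so $\bd(H) = \{x : \|x\|_C + \|{-x}\|_C = 2\}$. Assuming (ii), the criterion above yields a point $x \in \bd(H)\cap\bd(A)$; fix a halfspace $H^{\le}_{a,\rho}$ supporting $A$ at $x$, which — as $H \subseteq A$ — also supports $H$ at $x$, so that $\langle a,x\rangle = \rho = h_H(a) = h_A(a) = \tfrac12\big(h_C(a)+h_C(-a)\big)$. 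The crucial estimate is that for every $z$ with $\|z\|_C + \|{-z}\|_C = 2$ the points $z/\|z\|_C$ and $-z/\|{-z}\|_C$ lie on $\bd(C)$, which gives $\langle a,z\rangle \le \|z\|_C\,h_C(a)$ and $\langle a,z\rangle \le \|{-z}\|_C\,h_C(-a)$ simultaneously; maximizing $\min\{\mu\,h_C(a),\,\nu\,h_C(-a)\}$ over $\mu,\nu \ge 0$ with $\mu+\nu = 2$ yields the value $\tfrac{2\,h_C(a)\,h_C(-a)}{h_C(a)+h_C(-a)}$. Hence
\[
\tfrac12\big(h_C(a)+h_C(-a)\big) \;=\; h_H(a) \;\le\; \frac{2\,h_C(a)\,h_C(-a)}{h_C(a)+h_C(-a)} \;\le\; \tfrac12\big(h_C(a)+h_C(-a)\big),
\]
so equality holds throughout, and the equality case of the arithmetic--harmonic mean inequality for the numbers $h_C(a)$ and $h_C(-a)$ forces $h_C(a) = h_C(-a) =: \rho$. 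Putting $z = x$ back into the pointwise bound now gives $\rho = \langle a,x\rangle \le \rho\,\min\{\|x\|_C,\|{-x}\|_C\}$, hence $\min\{\|x\|_C,\|{-x}\|_C\} \ge 1$, which together with $\|x\|_C + \|{-x}\|_C = 2$ forces $\|x\|_C = \|{-x}\|_C = 1$, i.e.\ $x,-x \in \bd(C)$. Finally $\langle a,x\rangle = \rho = h_C(a)$ and $\langle{-a},{-x}\rangle = \rho = h_C(-a)$ present $H^{\le}_{a,\rho}$ and $H^{\le}_{-a,\rho}$ as parallel supporting halfspaces of $C$ at $x$ and $-x$, which is (iii) with $p = x$.

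The main obstacle is the estimate inside (ii) $\Rightarrow$ (iii): one must notice that at a contact point of the harmonic and arithmetic means the common supporting direction $a$ satisfies $h_H(a) = h_A(a)$ and that $h_H(a)$ is dominated by the harmonic mean of $h_C(a)$ and $h_C(-a)$, so the two-number arithmetic--harmonic mean inequality is squeezed to an equality, which simultaneously equalizes $h_C(a)$ and $h_C(-a)$ and pins the contact point onto $\bd(C)$. The remaining ingredients are routine: that $H$, $A$, $C\cap(-C)$ and $\conv(C\cup(-C))$ are genuine $0$-symmetric bodies with $0$ in their interiors (so bipolarity and the criterion apply), that $h_{C^\circ} = \|\cdot\|_C$, that a face of a Minkowski sum is the sum of the corresponding faces (which is what gives $h_A = \tfrac12\big(h_C+h_C(-\cdot)\big)$), and that the supporting halfspace of the larger of two nested bodies at a common boundary point also supports the smaller one — all classical facts that I would dispatch quickly.
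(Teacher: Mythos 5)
Your proposal is correct and follows the same skeleton as the paper's proof: the cycle (i)$\Rightarrow$(ii)$\Rightarrow$(iii)$\Rightarrow$(i), with the easy implications handled via the inclusion chain of Proposition~\ref{prop:means_of_sets} and the touching criterion for $0$-symmetric bodies from Proposition~\ref{prop:Opt_Containment}, and with the key implication (ii)$\Rightarrow$(iii) hinging on squeezing the two-number arithmetic--harmonic mean inequality to its equality case at a contact point of the two means. The only (harmless) difference is that you run that squeeze in the dual picture, on the support values $h_C(a)$ and $h_C(-a)$ via the description $\left(\tfrac12(C^\circ-C^\circ)\right)^\circ=\{x:\|x\|_C+\|-x\|_C\le 2\}$, and then recover $\|x\|_C=\|-x\|_C=1$ in a second step, whereas the paper applies the same inequality directly to the gauge values $\rho_1=\|p\|_C$, $\rho_2=\|-p\|_C$ of the contact point.
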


Let us mention that for any regular Minkowski centered $(2n+1)$-gon $P$ the
vertices of $-\frac{1}{s(P)} P$ are the midpoints of the edges of $P$. Hence, they obviously do not fulfill Part (iii) of Theorem \ref{lem:Charact_Opt_Means}. Letting $n$ grow, we see that there exist Minkowski centered $C \in \K^2$ with $s(C)$ arbitrary close to $1$ such that not all containments in the inequality chain \eqref{eq:means_of_sets} are optimal for $C$.
Furthermore, one may observe that a Minkowski centered regular pentagon has asymmetry $2/\varphi \approx 1.236 < \varphi$.





\section{Characterizations of optimal containment} 
\label{sec:OptimalityOtherMeans}

Let us first collect some simple set identities under affine transformations.

\begin{lemma}\label{lem:Means_Invariant}
Let $K,C \in \K^n$ and $A$ be a non-singular affine transformation. Then
\begin{equation*}
  \begin{split}
    A(K)\cap A(C)=A(K\cap C), \qquad \left( ((A(K))^\circ-(A(C))^\circ)/2 \right)^\circ=A\left((K^\circ- C^\circ)/2
    \right)^\circ, \\
    (A(K)+A(C))/2=A\left( (K+C)/2\right), \qquad \conv\left(A(K)\cup(A(C)\right)=A\left(\conv(K\cup C)\right).
  \end{split}
\end{equation*}
\end{lemma}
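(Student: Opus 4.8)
The plan is to prove Lemma~\ref{lem:Means_Invariant} by reducing to the two cases of an affine transformation $A(x) = Mx + t$: the linear part $x \mapsto Mx$ with $M$ non-singular, and the pure translation $x \mapsto x + t$. Since every non-singular affine map is a composition of these and all four mean operations produce convex bodies again (so the identities chain correctly), it suffices to verify each of the four identities separately under a linear map and under a translation.

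First I would dispatch the easy identities. For the minimum, $A(K \cap C) = A(K) \cap A(C)$ holds for any injective map $A$, purely set-theoretically. For the arithmetic mean, I would use that for a linear map $M$ one has $M(K+C) = M(K) + M(C)$ and $M(\tfrac12 K) = \tfrac12 M(K)$, while for a translation by $t$ one has $(K+t)+(C+t) = (K+C)+2t$, so $\tfrac12\big((K+t)+(C+t)\big) = \tfrac12(K+C) + t$; both combine to give $\tfrac12(A(K)+A(C)) = A\big(\tfrac12(K+C)\big)$. For the maximum, $\conv$ commutes with any affine map, $A(\conv(X)) = \conv(A(X))$, and $A(K \cup C) = A(K) \cup A(C)$, giving the claim immediately.

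The harmonic-mean identity is the one requiring care, and I expect it to be the main obstacle — the issue is bookkeeping the behaviour of polarity under affine maps and confirming that the translation part genuinely cancels. The key facts are: for a non-singular linear map $M$, $(M K)^\circ = M^{-\mathsf T} K^\circ$ (valid whether or not $0 \in \inter K$, interpreting $K^\circ$ as $\{a : a^\mathsf T x \le 1,\ x \in K\}$); polarity is order-reversing and sends $\tfrac12(A+B)$ appropriately; and $M^{-\mathsf T}$ is itself a non-singular linear map so it passes through $\tfrac12(\,\cdot + \cdot\,)$. Chaining these: $\big(\tfrac12((MK)^\circ + (MC)^\circ)\big)^\circ = \big(\tfrac12(M^{-\mathsf T}K^\circ + M^{-\mathsf T}C^\circ)\big)^\circ = \big(M^{-\mathsf T}\big(\tfrac12(K^\circ+C^\circ)\big)\big)^\circ = M\big(\tfrac12(K^\circ+C^\circ)\big)^\circ$, which is exactly $M$ applied to the harmonic mean. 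For a translation by $t$, the cleanest route is to note that the composite operation $(K,C) \mapsto \big(\tfrac12(K^\circ + C^\circ)\big)^\circ$ is, by Proposition~\ref{prop:means_of_sets} and the general theory, a genuine symmetric "mean" that is translation-covariant in the diagonal sense $(K+t,C+t) \mapsto (\text{harmonic mean of }K,C) + t$; alternatively one verifies this directly via $(K+t)^\circ$ and a short computation showing the shifts recombine. Combining the linear and translation cases yields the stated identity for general non-singular affine $A$.

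One point I would flag for the write-up: since the lemma is stated for $K, C \in \K^n$ without the hypothesis $0 \in \inter K$, the polar $K^\circ$ need not be bounded, but all four identities still make sense as set identities and the arguments above use only the algebraic/set-theoretic properties of $\cdot^\circ$, $+$, $\conv$, $\cap$ and affine maps, so no boundedness is needed; if one wants the harmonic mean to land in $\K^n$ one simply invokes Proposition~\ref{prop:means_of_sets}'s standing assumption, which is harmless here. I would keep the proof to a few lines, stating the linear and translation reductions and the three displayed chains, and leaving the elementary verifications to the reader.
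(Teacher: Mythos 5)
The paper states this lemma without any proof (it is introduced as a list of ``simple set identities''), so there is nothing to compare against; the only question is whether your argument is sound. Most of it is: the identities for the intersection, the arithmetic mean and the convex hull are genuinely affine-covariant and your reductions for them are correct, and your linear-case chain for the harmonic mean, using $(MK)^\circ=M^{-T}K^\circ$ (which indeed holds without assuming $0\in\inter K$) and $(M^{-T}X)^\circ=MX^\circ$, is exactly right.

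The gap is the translation step for the harmonic mean. The operation $(K,C)\mapsto\bigl(\tfrac12(K^\circ+C^\circ)\bigr)^\circ$ is \emph{not} translation-covariant, Proposition~\ref{prop:means_of_sets} says nothing of the sort, and the ``short computation showing the shifts recombine'' does not close, because polarity is taken with respect to the origin and destroys translation information. Concretely, in dimension one take $K=C=[-1,1]$ and $A(x)=x+1/2$. Then $C^\circ=[-1,1]$, so $\bigl(\tfrac12(C^\circ-C^\circ)\bigr)^\circ=[-1,1]$ and the right-hand side of the second identity is $[-1/2,3/2]$; but $(C+1/2)^\circ=[-2,2/3]$, so $\tfrac12\bigl((C+1/2)^\circ-(C+1/2)^\circ\bigr)=[-4/3,4/3]$ and the left-hand side is $[-3/4,3/4]$. (Conceptually: for $K=C$ the left-hand side is always $0$-symmetric, while the right-hand side is symmetric about $A(0)$, and a full-dimensional compact convex set has a unique center of symmetry.) So the harmonic-mean identity is simply false for translations; it holds only for the linear part of $A$, and your proof cannot be repaired --- the statement has to be restricted. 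This is harmless for the paper, since Corollary~\ref{cor:invariant} and the proof of Theorem~\ref{thm:PlanarCase} only ever invoke the lemma for non-singular \emph{linear} maps, but your write-up should prove that identity for linear $A$ only and flag the affine wording as an overstatement rather than assert a false covariance.
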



The following proposition characterizes the optimal containment $K\subset^{opt}C$ between two convex sets $K,C \in \K^n$
in terms of common boundary points and belonging supporting halfspaces (see \cite[Theorem 2.3]{BrK}).
\begin{proposition}\label{prop:Opt_Containment}
  Let $K,C\in\K^n$ and $K\subset C$. Then the following are equivalent:
  \begin{enumerate}[(i)]
  \item $K\subset^{opt}C$.
  \item There exist $k\in\{2,\dots,n+1\}$, $p^j\in K \cap \bd(C)$, $a^j$ outer normals of supporting halfspaces of
    $K$ and $C$ at $p^j$, $j=1,\dots,k$, such that
    $0\in\conv(\{a^1,\dots,a^k\})$.
  \end{enumerate}
  Moreover, in the case that $K,C\in\K^n_0$ (i) and (ii) are also equivalent to $K\cap \bd(C)\neq\emptyset$.
\end{proposition}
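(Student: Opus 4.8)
\textbf{Plan of proof for Proposition \ref{prop:Opt_Containment}.}
The statement is a characterization of optimal containment $K\subset^{opt}C$ purely in terms of touching boundary points and the associated outer normals. Since this is cited as \cite[Theorem 2.3]{BrK}, the natural approach is a separation/compactness argument combined with a scaling perturbation. The plan is to prove the two implications (i)$\Rightarrow$(ii) and (ii)$\Rightarrow$(i) separately, and then handle the $0$-symmetric refinement.

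\textbf{Direction (ii)$\Rightarrow$(i).}
First I would show sufficiency, which is the easier and more constructive direction. Suppose points $p^j\in K\cap\bd(C)$ and outer normals $a^j$ of common supporting halfspaces exist with $0\in\conv(\{a^1,\dots,a^k\})$. I would argue by contradiction: assume $K$ is \emph{not} optimally contained, so $K\subset t+\rho C$ for some $\rho<1$ and $t\in\R^n$. Writing $0=\sum_j\lambda_j a^j$ with $\lambda_j\ge 0$, $\sum_j\lambda_j=1$, I would evaluate the support function. For each $j$, the supporting halfspace at $p^j$ gives $(a^j)^Tx\le (a^j)^Tp^j$ for all $x\in C$, hence also for all $x\in K$; on the other hand $K\subset t+\rho C$ forces $(a^j)^Tp^j\le (a^j)^Tt+\rho\,(a^j)^Tp^j$ (using that $p^j$ attains the support value on $C$, so $(a^j)^T(p^j-t)\le \rho\,(a^j)^Tp^j$). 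Rearranging yields $(1-\rho)(a^j)^Tp^j\le (a^j)^Tt$. Taking the convex combination with weights $\lambda_j$ and using $\sum_j\lambda_j a^j=0$ makes the right-hand side vanish, so $\sum_j\lambda_j(1-\rho)(a^j)^Tp^j\le 0$; since $1-\rho>0$ this contradicts the fact that the $p^j$ lie on the support boundary with the correct orientation (the inequality $(a^j)^Tp^j$ cannot all be dominated). This contradiction establishes optimal containment.

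\textbf{Direction (i)$\Rightarrow$(ii).}
For necessity, I would set $\mathcal{A}$ to be the set of outer normals $a$ (normalized, say $\|a\|=1$) of halfspaces simultaneously supporting $K$ and $C$ at some common point of $K\cap\bd(C)$. The key claim is $0\in\conv(\mathcal{A})$. Suppose not; then by the separating hyperplane theorem there is a direction $u$ with $a^Tu>0$ for every $a\in\mathcal{A}$. The idea is that shrinking $C$ slightly toward a point moved in direction $u$ still contains $K$: I would show that for small $\varepsilon>0$ and suitable center, $K\subset \varepsilon u+(1-\delta)C$ for some $\delta>0$, contradicting $K\subset^{opt}C$. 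This uses a compactness argument on $\bd(C)$: away from the touching points $K$ is strictly inside $C$ with a uniform gap, and near the touching points the normals all have positive inner product with $u$, so a small translation in direction $u$ creates room to scale down. Carathéodory's theorem then reduces the convex combination to at most $n+1$ normals, and the lower bound $k\ge 2$ follows because a single supporting normal can never satisfy $0\in\conv(\{a^1\})=\{a^1\}$.

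\textbf{Main obstacle and the $0$-symmetric case.}
I expect the \emph{necessity} direction to be the main obstacle: making the "create room by translating in direction $u$" argument rigorous requires a careful uniform-gap estimate that separates the behavior near the finitely many active contact normals from the behavior on the rest of the boundary, and one must ensure the perturbed scaled copy still contains all of $K$, not merely the contact points. Finally, for the refinement when $K,C\in\K^n_0$, I would use central symmetry: if $p\in K\cap\bd(C)$ then $-p\in K\cap\bd(C)$ as well, and the supporting normals at $p$ and $-p$ are antipodal, so $a$ and $-a$ both lie in $\mathcal{A}$, giving $0=\tfrac12 a+\tfrac12(-a)\in\conv(\mathcal{A})$ immediately; hence $K\cap\bd(C)\neq\emptyset$ already forces condition (ii), completing the equivalence in the symmetric setting.
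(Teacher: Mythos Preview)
The paper does not give its own proof of this proposition: it is quoted verbatim as \cite[Theorem~2.3]{BrK} and used as a black box throughout. So there is no ``paper's proof'' to compare against; your sketch is essentially the standard argument behind that cited result, and the overall structure (separation for (i)$\Rightarrow$(ii), support-function/linear-combination contradiction for (ii)$\Rightarrow$(i), antipodal normals for the symmetric addendum) is sound.

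There is one genuine gap in your (ii)$\Rightarrow$(i) direction. From $K\subset t+\rho C$ with $\rho<1$ you correctly derive $(1-\rho)\,(a^j)^Tp^j\le (a^j)^Tt$ for each $j$, and the convex combination with $\sum_j\lambda_j a^j=0$ gives $(1-\rho)\sum_j\lambda_j\,(a^j)^Tp^j\le 0$. But your sentence ``this contradicts the fact that the $p^j$ lie on the support boundary with the correct orientation'' is not justified as stated: nothing in the hypotheses forces $(a^j)^Tp^j>0$. The support values $h_C(a^j)=(a^j)^Tp^j$ can be negative if $0\notin\inter(C)$. The fix is to observe that both conditions (i) and (ii) are translation-covariant, so one may first translate so that $0\in\inter(C)$; then $h_C(a^j)>0$ for every $j$, hence $\sum_j\lambda_j\,(a^j)^Tp^j>0$, and the contradiction goes through. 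Without this normalization (or an equivalent device) the final step does not close.
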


Lemma \ref{lem:Means_Invariant} together with Proposition \ref{prop:Opt_Containment} obviously yield the following corollary.

\begin{cor}\label{cor:invariant}
Let $C \in \mathcal{K}^n$ 
and let $L$ be a non-singular linear transformation. Then
\begin{enumerate}[a)]
  \item $C$ is Minkowski centered if and only if $L(C)$ is Minkowski centered. 
  \item $C \cap (-C) \subset^{opt} \conv(C \cup (-C))$
	if and only if 
	$L(C) \cap L(-C) \subset^{opt} \conv(L(C) \cup L(-C))$.
\end{enumerate}
\end{cor}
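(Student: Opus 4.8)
The plan is to deduce both parts directly from Lemma~\ref{lem:Means_Invariant} and Proposition~\ref{prop:Opt_Containment}, using the affine (indeed linear) invariance already recorded in the excerpt. For part a), I would unwind the definition of a Minkowski center: $0$ is a Minkowski center of $C$ means $C \subset s(C)(-C)$, i.e.\ $C \subset -s(C)C$. Applying the non-singular linear map $L$ to both sides and using $L(-s(C)C) = -s(C)L(C)$ gives $L(C) \subset -s(C)L(C)$; conversely, applying $L^{-1}$ reverses the implication, so $0$ is a Minkowski center of $C$ iff it is one of $L(C)$. It remains to note that $s(C) = s(L(C))$, which is precisely the affine invariance of the Minkowski asymmetry quoted in Section~\ref{sec:IntrNotMain}; hence the inclusion $L(C) \subset -s(C)L(C) = -s(L(C))L(C)$ says exactly that $L(C)$ is Minkowski centered. (Since $L$ is linear, $0$ is fixed, so there is no translation bookkeeping to worry about.)

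For part b), I would first observe that $L(-C) = -L(C)$ because $L$ is linear, so the sets $\conv(L(C)\cup L(-C))$ and $L(C)\cap L(-C)$ appearing on the right-hand side are genuinely $\conv(L(C)\cup(-L(C)))$ and $L(C)\cap(-L(C))$. Lemma~\ref{lem:Means_Invariant} (with $K = C$) then gives the two identities
\[
L(C)\cap(-L(C)) = L\bigl(C\cap(-C)\bigr), \qquad \conv\bigl(L(C)\cup(-L(C))\bigr) = L\bigl(\conv(C\cup(-C))\bigr).
\]
So the claimed equivalence reduces to: $C\cap(-C)\subset^{opt}\conv(C\cup(-C))$ iff $L(A)\subset^{opt}L(B)$, where $A := C\cap(-C)$ and $B := \conv(C\cup(-C))$.

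Thus the last step is the general fact that optimal containment is preserved by non-singular linear (more generally, affine) transformations. This follows from Proposition~\ref{prop:Opt_Containment}: if $A \subset^{opt} B$ is witnessed by common boundary points $p^j \in A\cap\bd(B)$ with supporting-halfspace outer normals $a^j$ satisfying $0\in\conv(\{a^1,\dots,a^k\})$, then $L(p^j) \in L(A)\cap\bd(L(B))$, and $(L^{-T})a^j$ is an outer normal of a supporting halfspace of both $L(A)$ and $L(B)$ at $L(p^j)$ (a linear map sends supporting halfspaces to supporting halfspaces, transforming normals by the inverse transpose); moreover $0 \in \conv(\{L^{-T}a^1,\dots,L^{-T}a^k\})$ since $L^{-T}$ is linear. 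Applying the same argument to $L^{-1}$ gives the converse. I expect essentially no obstacle here: the only mild subtlety is making sure the normal vectors transform correctly under $L$ (via $L^{-T}$) and that the convex-combination condition $0\in\conv(\{a^j\})$ is itself linearly invariant — both of which are immediate. This is exactly the content of the sentence preceding the corollary, so the proof is a one-line appeal to Lemma~\ref{lem:Means_Invariant} and Proposition~\ref{prop:Opt_Containment}.
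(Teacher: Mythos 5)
Your proof is correct and follows exactly the route the paper intends: the paper gives no written proof, stating only that Lemma~\ref{lem:Means_Invariant} together with Proposition~\ref{prop:Opt_Containment} ``obviously yield'' the corollary, and your argument (the set identities from the lemma, plus the invariance of optimal containment under non-singular linear maps via the $L^{-T}$-transformation of outer normals and the affine invariance of $s$) is precisely the intended filling-in of that one-line appeal.
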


Let us now add a proposition that is a result of Klee \cite{K} 
reduced to the two-dimensional case.

\begin{proposition} \label{prop:klee}
Let $P,C \in  \K^2$, $P$ a polygon and $C$ 0-symmetric, such that $P \subset^{opt} C$. Then $0 \in P$.
\end{proposition}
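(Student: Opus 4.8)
The plan is to prove Proposition~\ref{prop:klee} by contradiction, exploiting the optimal containment $P \subset^{opt} C$ together with the $0$-symmetry of $C$. Suppose $0 \notin P$. Then, since $P$ is a polygon (hence closed and convex) there is a hyperplane strictly separating $0$ from $P$: there exist $a \in \R^n \setminus \{0\}$ and $\rho > 0$ with $P \subset H^{\le}_{a,-\rho}$ while $0 \in \inter(H^{\le}_{-a,-\rho})$... more precisely $a^T x \le -\rho < 0$ for all $x \in P$. The point is that all of $P$ lies strictly on one side of a hyperplane through $0$; I would phrase this as $P \subset \{x : a^T x \le -\rho\}$ for some unit vector $a$ and $\rho > 0$.

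Next I would bring in Proposition~\ref{prop:Opt_Containment}. Since $P \subset^{opt} C$, there exist $k \in \{2,\dots,n+1\}$ (here $n=2$, so $k \in \{2,3\}$), common boundary points $p^j \in P \cap \bd(C)$ and outer normals $a^j$ of supporting halfspaces of both $P$ and $C$ at $p^j$ with $0 \in \conv(\{a^1,\dots,a^k\})$. Because $C$ is $0$-symmetric, for each such contact point $p^j \in \bd(C)$ with outer normal $a^j$, the antipodal point $-p^j$ also lies in $\bd(C)$ and $-a^j$ is an outer normal of $C$ at $-p^j$; the supporting halfspace of $C$ at $p^j$ with normal $a^j$ is $\{x : (a^j)^T x \le (a^j)^T p^j\}$, and since $a^j$ supports $P$ as well, $(a^j)^T p^j = \max_{x \in P}(a^j)^T x$. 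Combining this with the strict separation, $(a^j)^T p^j \le -\rho < 0$ for every $j$ — wait, that's only true if $a = a^j$; the cleaner route is: for \emph{every} direction $a^j$ we have $(a^j)^T p^j < 0$ is not automatic. Instead the key inequality I want is that $0 \in \conv(\{p^1,\dots,p^k\}) \subset P$, contradicting $0 \notin P$; to get this I need to relate the condition $0 \in \conv(\{a^j\})$ on the normals to the location of the $p^j$.

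The mechanism is: for each $j$, the halfspace $H^{\le}_{a^j, \rho_j}$ with $\rho_j := (a^j)^T p^j$ supports $C$ at $p^j$, and $0 \in \inter(C)$ forces $\rho_j > 0$, hence (after normalizing) I may scale the $a^j$ so that $(a^j)^T p^j = 1$ for all $j$ — i.e.\ the $p^j$ lie on the boundary of the polar-like body $\{x : (a^j)^T x \le 1 \ \forall j\}$. Now from $0 \in \conv(\{a^1,\dots,a^k\})$, write $0 = \sum_j \lambda_j a^j$ with $\lambda_j \ge 0$, $\sum \lambda_j = 1$, not all zero. Then for the point $q := \sum_j \mu_j p^j \in P$ (with suitable convex weights $\mu_j$ to be chosen) one computes that $q$ cannot be strictly separated from $0$: indeed pairing $a := $ any fixed separating normal with $P$ gives $a^T p^j \le -\rho$ for all $j$, so $a^T q \le -\rho < 0$ — consistent, no contradiction yet. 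The actual contradiction comes from the other direction: since $0 = \sum_j \lambda_j a^j$ and each $a^j$ is an outer normal of $P$ at $p^j$ (so $P \subset H^{\le}_{a^j, \rho_j}$), every $x \in P$ satisfies $(a^j)^T x \le \rho_j$; summing with weights $\lambda_j$ and using $\sum \lambda_j a^j = 0$ yields $0 = (\sum_j \lambda_j a^j)^T x \le \sum_j \lambda_j \rho_j$, i.e.\ $\sum_j \lambda_j \rho_j \ge 0$, which combined with each $\rho_j > 0$ is no contradiction. The genuinely correct argument, which I would carry out carefully, is the standard one: the origin lies in the interior of $C$, the contact normals positively span $\R^n$, and therefore $0$ must lie in $\conv(\{p^1,\dots,p^k\})$ — this is exactly the statement that a body whose polar has $0$ in its interior and whose supporting hyperplanes at the $p^j$ have normals positively spanning $\R^n$ must contain $0$ in the convex hull of those $p^j$; since $\conv(\{p^1,\dots,p^k\}) \subset P$, we get $0 \in P$, the desired contradiction.

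The main obstacle is making precise and rigorous the step ``contact normals positively spanning $\R^n$ at boundary points $p^j$ of a body with $0$ in its interior forces $0 \in \conv(\{p^j\})$.'' I would prove this as a small self-contained lemma: if $0 \notin \conv(\{p^1,\dots,p^k\})$, separate to get $b$ with $b^T p^j > 0$ for all $j$; then for small $t>0$ the point $t b \in \inter(C)$, but $t b$ must satisfy $(a^j)^T(tb) < \rho_j = (a^j)^T p^j$ for each $j$ — and I need to derive from $0 \in \conv(\{a^j\})$ and $b^T p^j > 0$ a violated inequality. Concretely, $0 = \sum \lambda_j a^j$ with $\lambda_j > 0$ on the support; pairing against $p^i$ gives $0 = \sum_j \lambda_j (a^j)^T p^i \le \sum_j \lambda_j \rho_j$ using $(a^j)^T p^i \le \rho_j$ — hmm. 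I will instead invoke Proposition~\ref{prop:Opt_Containment}'s last sentence indirectly or argue via polarity: replacing $P$ by a translate is not allowed, so the clean finish is to note $\frac12(P + (-P)) $ and $C$ but that changes the body. Given the delicacy, the safest concrete plan is: assume $0\notin P$, separate $0$ from $P$ strictly by a hyperplane with inner normal $a$ (so $a^Tx < 0$ on $P$); then $-a^T x > 0$ on $P$, so the supporting halfspace of $P$ in direction $-a$ has positive right-hand side, and crucially $P$ is contained in the open halfspace $\{a^Tx<0\}$. Apply Proposition~\ref{prop:Opt_Containment}: the outer normals $a^1,\dots,a^k$ at the contact points satisfy $0\in\conv(\{a^j\})$. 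For each $j$, $p^j\in\bd(C)$ and $-p^j\in\bd(C)$ by symmetry; since $a^j$ supports $C$ at $p^j$, the value $(a^j)^Tp^j=h_C(a^j)>0$ (as $0\in\inter C$). Also $a^j$ supports $P$ at $p^j$, so $(a^j)^Tp^j=h_P(a^j)$; but $p^j\in P$ and $a^Tp^j<0$. Now choose the convex combination $0=\sum\lambda_ja^j$ and evaluate $h_P$ at $0$: by sublinearity $0=h_P(0)\le\sum\lambda_j h_P(a^j)=\sum\lambda_j(a^j)^Tp^j$, no contradiction — so this route needs the reverse, using that $h_P(a^j)=(a^j)^Tp^j$ is \emph{attained}, giving $\sum\lambda_j(a^j)^Tp^j=(\sum\lambda_ja^j)^T(\text{common point if }p^j\text{ coincide})$. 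I expect the intended short proof simply cites that $0\in\conv(\{p^j\})\subset P$ follows because $K=\{p^j\}$ and $C$ $0$-symmetric means the last clause of Proposition~\ref{prop:Opt_Containment} ($K\cap\bd(C)\ne\emptyset$ suffices) interacts with $0\in\inter(C)$; I will present the lemma-based version above as the rigorous backbone, and flag this spanning lemma as the one technical point requiring care.
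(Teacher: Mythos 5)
The paper itself gives no proof of Proposition~\ref{prop:klee}; it is imported as a result of Klee \cite{K}, so there is no internal argument to compare yours against. Judged on its own terms, your attempt has a genuine gap, and it sits exactly where you flag it. Everything funnels into the ``spanning lemma'': that if $p^1,\dots,p^k\in\bd(C)$, $0\in\inter(C)$, and the outer normals $a^1,\dots,a^k$ of supporting halfspaces at these points satisfy $0\in\conv(\{a^1,\dots,a^k\})$, then $0\in\conv(\{p^1,\dots,p^k\})$. That lemma is false for non-strictly-convex $C$: take $C=[-1,1]^2$, $p^1=(1,0.9)^T$, $p^2=(-1,0.9)^T$ with normals $a^1=(1,0)^T$, $a^2=(-1,0)^T$; then $0=\frac12 a^1+\frac12 a^2$, but $\conv(\{p^1,p^2\})=[-1,1]\times\{0.9\}$ misses the origin. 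The normals only pin down the support values $(a^j)^Tp^j=\max_{x\in C}(a^j)^Tx$, not where on the corresponding face of $C$ the contact point lies, which is why each of your computations with $0=\sum_j\lambda_j a^j$ terminates in ``no contradiction.''

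Moreover, the dead ends are structural rather than a matter of care: with the paper's definition of $\subset^{opt}$, the statement as literally written even admits a counterexample. Let $C=[-1,1]^2$ and $P=\conv(\{(1,0.9)^T,(-1,0.9)^T,(0,0.8)^T\})$. Then $P\subset C$, and no translate of $\rho C$ with $\rho<1$ contains both $(1,0.9)^T$ and $(-1,0.9)^T$ (their first coordinates differ by $2>2\rho$), so $P\subset^{opt}C$, yet $0\notin P$. The planar result one can actually hope to extract from Klee is existential --- some optimal center of a bounded set lies in its closed convex hull --- not that every optimal center (here $0$) does; the application in Corollary~\ref{cor:zero-inside} leans on extra structure (the container there is the intersection of the supporting halfspaces at the $p^i$ with its reflection). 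So the proposition cannot be proved along your lines, or indeed at all in this generality; a rigorous write-up must either establish and use the existential form, or add hypotheses that exclude configurations like the one above.
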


Taking the two preceding propositions together we obtain the corollary below.

\begin{cor} \label{cor:zero-inside}
Let $C \in \K^2$ be Minkowski centered, but not 0-symmetric. Then there exist $p^1,p^2,p^3 \in \bd(C) \cap (-s(C)\bd(C))$ such that $0 \in \conv(\{p^1,p^2,p^3\})$.
\end{cor}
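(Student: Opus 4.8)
The goal is to combine Proposition~\ref{prop:Opt_Containment} and Proposition~\ref{prop:klee} to extract the three boundary points. First I would set $s:=s(C)$ and recall that since $C$ is Minkowski centered we have $C\subset s(-C)$, equivalently $-\tfrac1s C\subset C$; moreover the Minkowski asymmetry being attained means this containment is optimal, i.e.\ $-\tfrac1s C\subset^{opt} C$ (if it were not optimal we could shrink further and lower the asymmetry, contradicting the definition of $s(C)$). Since $C$ is not $0$-symmetric, $s>1$, so $-\tfrac1s C$ is a genuinely smaller homothet of $-C$; it is still convex and full-dimensional.

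Next I would apply Proposition~\ref{prop:Opt_Containment} to the optimal containment $-\tfrac1s C \subset^{opt} C$. This gives $k\in\{2,3\}$ common boundary points $q^j\in \bd(-\tfrac1s C)\cap\bd(C)$ with outer normals $a^j$ of supporting halfspaces at $q^j$ such that $0\in\conv(\{a^1,\dots,a^k\})$. The points $q^j$ lie in $\bd(C)$, and rescaling, $-s q^j\in\bd(-sC)$, so writing $p^j:=-sq^j$ we get $p^j\in\bd(C)\cap(-s(C)\bd(C))$ as desired (note $q^j\in\bd(C)$ already, and $q^j\in\bd(-\tfrac1sC)$ rescales to $-sq^j=p^j\in\bd(-sC)$; conversely $p^j=-sq^j$ and $q^j=-\tfrac1sp^j\in\bd(C)$ so $p^j\in -s\bd(C)$). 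The one thing to check is that we may take $k=3$ and not merely $k=2$: this is exactly where Proposition~\ref{prop:klee} enters. If $k=2$, the two normals $a^1,a^2$ satisfy $0\in\conv(\{a^1,a^2\})$, so $a^2=-\lambda a^1$ for some $\lambda>0$; after rescaling we may take $a^2=-a^1$, meaning the two supporting hyperplanes at $q^1,q^2$ are parallel. But then consider the $0$-symmetric body $\tfrac1s C\cap(-\tfrac1s C)$ — actually cleaner: apply Proposition~\ref{prop:klee} directly. Here $P=-\tfrac1sC$ is a convex body but not necessarily a polygon, so I would instead argue that $k=2$ forces $0\in[q^1,q^2]$ by a direct separation argument, and then $0\in[q^1,q^2]\subset\bd(C)$ would put $0$ on the boundary of $C$, contradicting that $C$ is a convex body (full-dimensional, so $0\in\inter(C)$ since $0$ is a Minkowski center). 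Hence $k=3$ and $0\in\conv(\{q^1,q^2,q^3\})$, which rescales (by $-s<0$, preserving the property $0\in\conv$) to $0\in\conv(\{p^1,p^2,p^3\})$.

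Alternatively, and perhaps more in the spirit of the stated Proposition~\ref{prop:klee}: one reduces to the polygonal case. Approximate $C$ by Minkowski-centered polygons, or observe that the three common boundary points and their normals only depend on local structure; in fact the cleanest route is to note that Corollary~\ref{cor:invariant} lets us work up to linear transformation, and then invoke that the contact set of the optimal containment $-\tfrac1sC\subset^{opt}C$ has $0$ in the convex hull of at least a triangle's worth of points precisely because a two-point (parallel-hyperplane) contact would make $0$ lie on $\bd(C)$. The main obstacle is therefore the clean exclusion of the case $k=2$: one must rule out that the Minkowski asymmetry is attained via a single pair of parallel supporting hyperplanes, and the key observation making this work is that the common point $q^1$ (or the segment $[q^1,q^2]$) would have to pass through $0$, contradicting $0\in\inter(C)$. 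Once $k=3$ is established, the conclusion $0\in\conv(\{p^1,p^2,p^3\})$ with $p^j\in\bd(C)\cap(-s(C)\bd(C))$ is immediate from rescaling the contact points by $-s(C)$.
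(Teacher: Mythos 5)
There is a genuine gap at the central step. Proposition~\ref{prop:Opt_Containment} gives you contact points $q^1,\dots,q^k$ together with outer normals $a^1,\dots,a^k$ satisfying $0\in\conv(\{a^1,\dots,a^k\})$ --- a condition on the \emph{normals}, not on the points. When you conclude ``hence $k=3$ and $0\in\conv(\{q^1,q^2,q^3\})$'' you silently replace the normals by the contact points; this implication is precisely the nontrivial content of the corollary and is nowhere justified in your proposal. The paper closes this gap by building an auxiliary $0$-symmetric container: let $S$ be the intersection of the three common supporting halfspaces at the contact points; since $C$ and $-\frac{1}{s(C)}C$ are also supported at the antipodally rescaled contact points by halfspaces with the negated normals, the triangle $\conv(\{p^1,p^2,p^3\})$ is optimally contained in the $0$-symmetric set $S\cap(-S)$, and only then does Proposition~\ref{prop:klee} --- applied with the \emph{triangle} as the polygon $P$, not with $C$ or a polygonal approximation of $C$ --- yield $0\in\conv(\{p^1,p^2,p^3\})$. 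Your ``alternative'' paragraph gestures at Proposition~\ref{prop:klee} but never constructs a $0$-symmetric container nor identifies the polygon to which it is to be applied, so it does not repair the gap.

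A secondary flaw: your exclusion of $k=2$ rests on the claims $0\in[q^1,q^2]$ and $[q^1,q^2]\subset\bd(C)$, neither of which follows. The condition $0\in\conv(\{a^1,a^2\})$ says the two normals are antiparallel; it says nothing about the chord joining the contact points, and a chord between two boundary points of $C$ is not contained in $\bd(C)$ in general. The correct short argument is the one the paper alludes to: two parallel halfspaces supporting both $C$ and its homothet $-\frac{1}{s}C$ at common points force the widths of the two bodies in that direction to coincide, i.e.\ the width $w>0$ of $C$ satisfies $w=\frac{1}{s}w$, hence $s=1$, contradicting the assumption that $C$ is not $0$-symmetric. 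The remainder of your setup (passing between $C\subset^{opt}-sC$ and $-\frac{1}{s}C\subset^{opt}C$, and the rescaling $p^j=-sq^j$ of the contact points) is correct.
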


\begin{proof}
Let us first mention that the existence of two or three such touching points of $\bd(C) \cap (-s(C)\bd(C))$ is a direct consequence of Proposition \ref{prop:Opt_Containment} and if it would only be two it would follow $s(C)=1$.

Now, let $S$ be the intersection of the three common supporting halfsspaces of $C$ and $-s(C)C$ at the points $p^i$, $i=1,2,3$. In addition $C$ (together with $-1/s(C) C$) is also supported in $1/s(C)p^i$ by halfspaces with outer normals being the negatives of the outer normals of the starting three. Hence, we obtain that $\conv(\{p^1,p^2,p^3\})$ is optimally contained in the minimum $S \cap (-S)$ of $S$ and $-S$ and therefore, by Proposition \ref{prop:klee}, that $0 \in \conv(\{p^1,p^2,p^3\})$.
\end{proof}




\begin{proof}[Proof of Theorem \ref{lem:Charact_Opt_Means}]
  \text{}
  \begin{itemize}
  \item[(i) $\Rightarrow$ (ii)] This part of the proof follows directly from Proposition \ref{prop:means_of_sets}.
  \item[(ii) $\Rightarrow$ (iii)]
    Assuming that $(\frac{C^\circ-C^\circ}2)^\circ \subset^{opt} \frac{C-C}2$ we obtain from
    Proposition \ref{prop:Opt_Containment} that there exists a common boundary point $p$ of the two sets.
    Let $\rho_1, \rho_2>0 $ be the smallest factors such that $\frac{1}{\rho_1 } p \in \text{bd} (C)$ and
    $\frac{1}{\rho_2 } p \in \text{bd} (-C)$, respectively.
    On the one hand, this implies
    \[ \frac{1}{2} \left( \frac{1}{\rho_1} +  \frac{1}{\rho_2}  \right) p \in  \frac{C-C}{2} \]
    and since $p \in \bd \left( \frac{C-C}{2} \right)$, we have that
    \begin{equation}\label{eq:th12_1}
      1 \leq \left( \frac12\left(\frac{1}{\rho_1} +\frac{1}{\rho_2}\right)\right)^{-1}. 
    \end{equation}
    On the other hand, from $\frac{1}{\rho_1 } p \in \text{bd} (C)$ follows that
    $C^{\circ} \subset \{a \in \R^n: a^T p \leq \rho_1 \}$ and that there exists some $a^1 \in \bd (C^{\circ})$ such that
    $ (a^1)^T p = \rho_1$. Similarly, we obtain $-C^{\circ} \subset \{a \in \R^n: a^T p \leq \rho_2 \}$
    and the existence of $a^2 \in \bd (-C^{\circ})$ such that $ (a^2)^T p = \rho_2$.
    Hence, $\frac{1}{2} (C^{\circ} -C^{\circ} ) \subset \{a \in \R^n: a^T p \leq \frac{1}{2} (\rho_1 +\rho_2)\}$ and
    $\frac12(a^1+a^2) \in \bd \left( \frac{1}{2} \left( C^{\circ} -C^{\circ} \right) \right)$ with
    $\frac12(a^1+a^2)^T p = \frac{1}{2} (\rho_1 +\rho_2)$. This means
    \[	\frac{2}{\rho_1+\rho_2}p \in \bd \left( \frac{C^{\circ}-C^{\circ}}{2}\right)^{\circ},\]
    which by the fact that $p \in \text{bd} \left( \frac{C^{\circ}-C^{\circ}}{2}\right)^{\circ}$ implies
	\begin{equation}\label{eq:th12_2}
	\frac{2}{\rho_1+\rho_2} = 1.
	\end{equation}
	Combining \eqref{eq:th12_1}, \eqref{eq:th12_2}, we obtain that the arithmetic mean is not greater than the
  harmonic mean of $\rho_1$ and $\rho_2$, thus $\rho_1=\rho_2=1$.
  This proves $p\in C \cap (- C)$. 

  Finally, let $H_{a,\rho}^{\le}$  be a supporting half space of $(C-C)/2$ at $p$ and assume $H_{a,\rho}^{\le}$
  does not support $C$.
  Hence there would exist some $q \in C$ with $a^T q > \rho$. Now, since $p \in -C$, we obtain $(p+q)/2 \in (C-C)/2$, which, because of
	$a^T \left( \frac{p+q}{2} \right) >\rho$, contradicts the fact that $(C-C)/2\subset H_{a,\rho}^{\le}$.
  This proves $C \subset H_{a,\rho}^{\le}$ and analogously we one obtains $-C \subset H_{a,\rho}^{\le}$.
	However, the convexity of halfspaces now implies $\conv(C\cup(-C))\subset H_{a,\rho}^{\le}$, which shows that
  condition (iii) is fulfilled.

  \item[(iii) $\Rightarrow$ (i)]
    Assuming that $C$ is supported by $H_{a,\rho}^{\le}$, $H_{-a,\rho}^{\le}$ at $p$, $-p$, respectively,
    the same holds for $-C$. Hence, we have
    $p,-p\in C\cap(-C)$ and $\conv(C\cup(-C))$ is supported by $H_{a,\rho}^{\le}$, $H_{-a,\rho}^{\le}$ at $p,-p$, respectively.
    By Proposition \ref{prop:Opt_Containment} this means that $C \cap (-C) \subset^{opt} \text{conv} (C \cup (-C))$.
\end{itemize}
\end{proof}

\section{Main result}\label{sec:Main}



\begin{proof}[Proof of Theorem \ref{thm:PlanarCase}] 
Let $C\in\K^2$ be Minkowski centered, with $s:=s(C)>1$, such that $\left(\frac12 (C^\circ-C^\circ) \right)^{\circ}\subset^{opt} \frac12 (C-C)$. By Theorem \ref{lem:Charact_Opt_Means} this optimality condition is equivalent to $C\cap(-C)\subset^{opt}\conv(C\cup(-C))$ and to the existence of $-p,p \in \bd(-sC)$, as well as parallel 
halfplanes $-H, H$ supporting $-sC$ at $-p,p$, respectively. Since $C$ is Minkowski centered, we have $C \subset^{opt} -sC$ and therefore we obtain by Proposition \ref{prop:Opt_Containment} the existence of $k \in \{2,3\}$, $q^1,\dots,q^k \in \bd(C) \cap \bd(-sC)$ and outer normals of supporting halfplanes $a^1,\dots,a^k$ with $0 \in \conv(\{a^1,\dots,a^k\})$. Moreover, from $s>1$ easily follows that $k=3$.

It cannot be that $\pm p \not \in \{q^1,q^2,q^3\}$. Otherwise, let, e.g., $q^2=p$. Then we have $q^2 \in C \cap \bd(-sC)$ and thus  $-s p = -sq^2 \in \bd(-sC)$, which would imply $s(C)=1$. 

By Corollary \ref{cor:zero-inside} we have $0 \in \conv(\{q^1,q^2,q^3\})$. Hence, we can assume w.l.o.g.~that $q^1$ is located on one side, while $q^2, q^3$ on the other side of the line $\aff(\{-p,p\})$ and moreover even that $-p \in \pos\{q^1,q^3\}$ and $p \in \pos\{q^1,q^2\}$.
Observe that the lines $\aff (\{-p,q^3\})$ and $\aff (\{p,q^2\})$ intersect in some point $d^1$. Otherwise, we would have $q^3 \in \bd(-H)$ and $q^2 \in \bd(H)$ and therefore $[q^3,-p], [q^2,p] \subset \bd(-sC)$. This would imply that the segment $[-\frac{1}{s}  q^2,-\frac{1}{s} p]$, which is
parallel to $[q^3,-p]$, belongs to  $\bd(C) \cap \inter (-sC).$ Together with $q^1 \in \bd(C)$ and $s>1$ this would contradict the convexity of $C$.

Now, we choose $d^2 \in \bd(-H)$, $d^3 \in \bd(H)$ such that $q^1 \in [d^2,d^3]$ and $[d^2,d^3]$ is parallel to $[q^2,q^3]$.

\begin{figure}
		\centering
		\begin{tikzpicture}[scale=1.3]
		\draw [thick] (2,0) -- (1,-2.5) -- (-2,0) -- (-0.6,1.921182) -- (2,0);
		\draw [thick,dblue] (2,0) -- (1,-2.5) -- (-2,0)  -- (-2,2.41712)  -- (2,1) -- (2,0);
		\draw [thick] (1.3,-1.75) -- (-0.8,-1.006);
		\draw [thick] (2,0) -- (-1.1792,1.12633);
		\draw [thick] (-2, -3) -- (-2,0);
		\draw [thick] (-2,2.41712) -- (-2,3);
		\draw [thick] (2, -3) -- (2,0);
		\draw [thick] (2, 1) -- (2,3);
		\draw [thick] (0.9,1.146) --(-1.4769,1.9882);
		\draw [dashed] (-2,0) -- (2,0);
		\draw [dashed] (-0.6,1.921182) --(0.65,-2.0812805);
		\draw [dashed] (-0.79,-1.006) --(0.9,1.146);
		\draw [dashed] (1.3,-1.75) --(-1.4769,1.9882);

		\draw [fill] node[above left]{$0$} (0,0) circle [radius=0.02];
		\draw [fill] (-2,0) circle [radius=0.02]; 
		\draw [fill] (2,0) circle [radius=0.02]; 
		\draw [fill] (1.3,-1.75) circle [radius=0.02]; 
		\draw [fill] (-0.79,-1.006) circle [radius=0.02]; 
		\draw [fill] (1,-2.5) circle [radius=0.02]; 
		\draw [fill] (-0.6,1.921182) circle [radius=0.02]; 
		\draw [fill] (2,1) circle [radius=0.02]; 
		\draw [fill] (-2,2.41712) circle [radius=0.02]; 
		\draw [fill] (0.65,-2.0812805) circle [radius=0.02]; 
		\draw [fill] (0.45,-1.45) circle [radius=0.02]; 
		\draw [fill] (-1.1792,1.12633) circle [radius=0.02]; 
		\draw [fill] (0.9,1.146) circle [radius=0.02]; 
		\draw [fill] (-1.4769,1.9882) circle [radius=0.02]; 

		\draw (-2.3,0) node {$-p$};
		\draw (2.2,0) node {$p$};
		\draw (0.8,-2.6) node {$d^1$};
		\draw (1.5,-1.7) node {$q^2$};
		\draw (-1.0,-1.25) node {$q^3$};
		\draw (-0.7,2.2) node {$q^1$};
		\draw (2.2,1) node {$d^3$};
		\draw (-2.2,2.41712) node {$d^2$};
		\draw (0.2,-2.3) node {$-sq^1$};
		\draw (0.3,-1.58) node {$g$};
		\draw (-1.3,1.3) node {$q$};
		\draw (-2.5,-2.7) node {$-H$};
		\draw (2.5,-2.7) node {$H$};
		\draw (1.2,1) node {$-sq^3$};
		\draw (-1.65,1.6) node {$-sq^2$};

		\end{tikzpicture}
	\caption{Construction used in the proof of Theorem  \ref{thm:PlanarCase}
	}
	\label{fig:Proof}
\end{figure}

Let us first prove that
\begin{equation}\label{eq:trianglefor-sc}
	-sq^1 \in \conv(\{q^2,q^3,d^1\}).
\end{equation}
Since $0 \in \conv(\{q^1,q^2,q^3\})$ 
we have $-sq^1 \in \pos(\{q^2,q^3\})$. 
Thus using the fact that  $q^2, q^3, -sq^1  \in \bd(-sC)$, the convexity of $-sC$ implies that $-sq^1 \in \conv(\{q^2,q^3,d^1\})$.

The next fact we want to see is
\begin{equation}\label{eq:trianglefor-sb}
	-sq^3 \in \conv(\{p,q^1,d^3\}).
\end{equation}
To see this, remember that $-H$ supports $-sC$ at $-p$. 
Moreover, directly from $-p \in \pos(\{q^1,q^3\})$ we see that $-sq^3 \in \pos(\{p,q^1\})$. However, since $p, q^1, -sq^3 \in \bd(-sC)$, the convexity of $-sC$ 
implies 
$-sq^3 \not \in \inter ( \conv(\{0,p,q^1\}))$. 
Finally, collecting the facts that $q^1,-sq^2, -sq^3 \in \bd(-sC)$, $q^1 \in \pos(\{-sq^2,-sq^3\})$, and the parallelity of $[-sq^2,-sq^3]$ and $[d^2,d^3]$, we obtain  
$-sq^3  \in \conv(\{p,q^1,d^3\})$.

Similarly to \eqref{eq:trianglefor-sb}, one may prove
\begin{equation}\label{eq:trianglefor-sa}
	-sq^2 \in \conv (\{-p,q^1, d^2 \}).
\end{equation}

Our goal is to determine the greatest possible $s$ such that $C\cap(-C) \subset^{opt} \conv(C\cup(-C))$ is still fulfilled. We say that 
the points $q^1,q^2,q^3$ present a \emph{valid situation} if they fulfill the conditions \eqref{eq:trianglefor-sc}, \eqref{eq:trianglefor-sb} and \eqref{eq:trianglefor-sa}. We do the following changes on $q^1,q^2,q^3$, so that after each step, we are still under a valid situation for the given asymmetry $s$:

\begin{enumerate}
    \item[(i)] Replace $q^2$ (resp. $q^3$) by the point in $[q^2,p]$ (resp. $[q^3,-p]$) such that $-sq^2\in -H$ (resp. $-sq^3\in H$).  
    Since $s>1$, $q^2$ belongs in the strip between $H$ and $-H$, and $-sp$ belongs outside the same strip and is closer to $-H$ than to $H$, then $-s[q^2,p]=[-sq^2,-sp]$ intersects $-H$ at a point $-s\widetilde{q}^2$. Let us replace $q^2$ by $\widetilde{q}^2$.
    \item[(ii)] Replace $q^1$ by $\mu q^1$, for some $\mu<1$, such that $\mu q^1\in[-sq^2,-sq^3]$.
    \item[(iii)] Substitute $q^1$ by $-\gamma d^1\in[-sq^2,-sq^3]$, for some $\gamma>0$.
\end{enumerate}
Recognize that $s\gamma d^1=-sq^1\in\conv(\{d^1,q^2,q^3\})$ implies $s\gamma\leq 1$. 

Now, we can study the maximal possible value for $s$, which means we want to characterize the situation, in which $s$ gets maximal such that $s\gamma\leq 1$. Thus we need to know the explicit value of $\gamma$ (depending on $s$). 

To do so, after a suitable linear transformation, suppose that $p=(1,0)$, and $H$ and $-H$ are vertical lines (perpendicular to $[-p,p]$). Because of Step (i) above we may furthermore assume $q^2=(1/s,-a)^T$ and $q^3=(-1/s,-1)^T$ for some $a \in (0,1]$. 
Now, we need the coordinates of $d^1$, which is the intersection of the lines $\aff\{p,q^2\}$ and $\aff\{-p,q^3\}$. We obtain
\[
d_2^1=-\frac{1}{1-\frac1s}(d_1^1+1)\quad\text{and}\quad d_2^1=\frac{a}{1-\frac1s}(d_1^1-1),
\]
resulting in 
\[
d^1=\left(\frac{a-1}{a+1},\frac{-2a}{(1-\frac1s)(a+1)}\right)^T.
\]
Now, we compute $\gamma$ such that Condition (iii) is fulfilled, i.e.
\[
-\gamma d^1\in[-sq^2,-sq^3]=[(-1,sa)^T,(1,s)^T]. 
\]
Hence, for some $\lambda\in[0,1]$, we have
\[
-\gamma\left(\frac{a-1}{a+1},\frac{-2a}{(1-1/s)(a+1)}\right)
=(1-\lambda)(-1,sa)^T+\lambda(1,s)^T=\left(-1+2\lambda,s((1-\lambda)a+\lambda )\right)^T
\]
and it is easy to check that this implies
\[
\gamma=\frac{(s-1)(a+1)^2}{4a-(s-1)(a-1)^2}.
\]
Thus the problem of finding the maximal $s$ under the condition $s\gamma\leq 1$ rewrites as
\[
\max s, \text{ s.t.~}\frac{s(s-1)(a+1)^2}{4a-(s-1)(a-1)^2}\leq 1.
\]
The above condition easily rewrites as 
\[
(s^2-1)(a+1)^2-4as \leq 0.
\]
We are interested in the maximum $s$, i.e., in the larger of the two roots of the equation $(s^2-1)(a+1)^2-4as = 0$, which computes to
\[
s=\frac{2a}{(a+1)^2}+\sqrt{1+\frac{4a^2}{(a+1)^4}}=:h(a),
\]
$a\in(0,1]$. Hence, the maximum of $s$ coincides with the maximum of $h(a)$ with $a\in(0,1]$. It is straightforward to verify that $h(a)$ is increasing in $(0,1]$, and thus, we can conclude that
\[
\max\,s=\max_{a\in(0,1]}\,h(a)=h(1)=\frac{1+\sqrt{5}}{2}=\varphi.
\]

Now, note that equality holds if and only if $a=1$, $\gamma=\varphi-1$, and $d^1=(0,-\varphi/(\varphi-1))^T$. Moreover, in the extreme case we have $\varphi\gamma=1$, which is true if and only if $-\varphi q^1=d^1$, 
$q^2=(1/\varphi,-1)^T$ and $q^3=(-1/\varphi,-1)^T$. 
Since $q^2\in\bd(-\varphi C)\cap[d^1,p]$, we have $[d^1,p]\subset\bd(-\varphi C)$. The same reasoning with $q^3$ replacing $q^2$ implies $[d^1,-p]\subset\bd(-\varphi C)$.
Moreover, $q^1=(0,1/(\varphi-1))^T = -\gamma d^1 \in [-\varphi q^2,-\varphi q^3]$. 
Thus $q^1\in\bd(-\varphi C)$ implies $[-\varphi q^2,-\varphi q^3]\subset\bd(-\varphi C)$. Since it is also clear that $[p,-\varphi q^3],[-p,-\varphi q^2]\subset\bd(-\varphi C)$, we got a complete description of the boundary of $-\varphi C$, thus proving
$-\varphi C=\conv(\{d^1,\pm p, -\varphi q^2,-\varphi q^3\})$. Finally, since $\varphi = 1/(\varphi-1)$ we obtain
\begin{align*}
C  = \conv\left(\left\{\left(0,\frac{1}{\varphi-1}\right)^T,\left(\pm\frac{1}{\varphi},0\right)^T,\left(\pm \frac{1}{\varphi}, -1 \right)^T\right\}\right) = \left(\begin{array}{cc}\frac{1}{\varphi}&0\\0& 1 \end{array}\right)\GH,
\end{align*}
which concludes the proof of our theorem.

\end{proof}

\begin{rmk}
	For every $s\in[1,\varphi]$ there exists $C\in\K^2$ Minkowski centered
	with $s(C)=s$ such that
\[
C\cap(-C)\subset^{opt}\conv(C\cup(-C).
\]
To see this we perform a symmetrization process: making a hexagon from the pentagon $\GH$ by adding the point $(0,-\tau)^T$ for $\tau \in [1,\varphi^2]$ and translating the whole set in direction of $(1,0)^T$ such that it is Minkowski centered again. This way we obtain continuously monotonly shrinking Minkowski asymmetry with growing $\tau$, ending in a 0-symmetric hexagon when $\tau=\varphi^2$, while keeping property (iii) of Theorem \ref{lem:Charact_Opt_Means} true. 
	
\end{rmk}

\end{document}